\theoremstyle{plain}
\newtheorem{thm}{Theorem}[section]
\theoremstyle{plain}
\newtheorem{lemma}[thm]{Lemma}
\newtheorem{conjecture}[thm]{Conjecture}
\theoremstyle{definition}
\newtheorem{rem}[thm]{Remark}
\newtheorem{defn}[thm]{Definition}
\theoremstyle{remark}
\newcommand{\zz}{\mathbf{z}}
\newcommand{\xx}{\mathbf{x}}
\newcommand{\yy}{\mathbf{y}}
\newcommand{\uR}{\underline{\mathbb{R}}}
\newcommand{\RR}{\mathbb{R}}
\newcommand{\mv}{\mathbf{m}}
\newcommand{\ee}{\mathbf{e}}
\newcommand{\NN}{\mathbb{N}}
\newcommand{\oS}{\overline{S}}
\newcommand{\mol}{\overline{m}}
\newcommand{\mul}{\underline{m}}
\newcommand{\ww}{\mathbf{w}}
\title{Intertwining of maxima of sum of translates functions with nonsingular kernels}
\author{B\'alint Farkas, B\'ela Nagy and Szilárd Gy. R\'ev\'esz}
\date{}
\begin{document}

\maketitle

\begin{center}
Dedicated to the memory of \\
Yu.~N.~Subbotin
and
S.~A.~Telyakovskii, \\
excellent mathematicians and fine people
\end{center}

\begin{abstract}
In previous papers we investigated
so-called sum of translates functions
$F(\xx,t):=J(t)+\sum_{j=1}^n \nu_j K(t-x_j)$,
where $J:[0,1]\to \uR:=\RR\cup\{-\infty\}$ is
a ``sufficiently nondegenerate''
and upper-bounded ``field function'',
and $K:[-1,1]\to \uR$ is a fixed ``kernel function'',
concave both on $(-1,0)$ and $(0,1)$,
and also satisfying the singularity condition
$K(0)=\lim_{t\to 0} K(t)=-\infty$.
For node systems $\xx:=(x_1,\ldots,x_n)$
with $x_0:=0\le x_1\le\dots\le x_n\le 1=:x_{n+1}$,
we analyzed the behavior of the local maxima vector
$\mv:=(m_0,m_1,\ldots,m_n)$,
where $m_j:=m_j(\xx):=\sup_{x_j\le t\le x_{j+1}} F(\xx,t)$.

Among other results we proved a strong intertwining property:
if the kernels are also decreasing on $(-1,0)$
and increasing on $(0,1)$,
and the field function is upper semicontinuous,
then for any two different node systems there are $i,j$ such
that $m_i(\xx)<m_i(\yy)$ and $m_j(\xx)>m_j(\yy)$.

Here we partially succeed to extend this even to nonsingular kernels.

\medskip\noindent Keywords: minimax problems, kernel function,
sum of translates function, vector of local maxima,
equioscillation, intertwining of interval maxima

\medskip\noindent MSC 2020 Classification:
26A51 
41A50 
\end{abstract}

\section{Introduction}\label{sec:Introduction}

In our papers \cite{mikrofentongen,Homeo,Vegtelen}
we analyzed interval maxima vectors of sum of translates functions.
The very notion of the sum of translates functions
originates
from an ingenious paper of Fenton \cite{Fenton},
who himself worked out results on them for use in
his work proving a conjecture of Barry.
About the origins and wide range of applications, of the approach,
ranging from the strong polarization problem to
Chebyshev constants and Bojanov theorems
we refer the reader to the papers \cite{Homeo,Vegtelen}
as well as to
\cite{Bojanov1979,Bojanov1983}.

A function $K:(-1,0)\cup (0,1)\to \RR$ will be called a \emph{kernel function}
if it is concave on $(-1,0)$ and on $(0,1)$, and if it satisfies
\begin{equation*}
\lim_{t\downarrow 0} K(t) =\lim_{t\uparrow 0} K(t).
\end{equation*}
By the concavity assumption these limits exist,
and a kernel function has one-sided limits also at $-1$ and $1$.
We set
\begin{equation*}
K(0):=\lim_{t\to 0}K(t),\
K(-1):=\lim_{t\downarrow -1} K(t)
\text{ and }
K(1):=\lim_{t\uparrow 1} K(t).
\end{equation*}
Note explicitly that we thus obtain an
extended continuous
function $K:[-1,1]\to \RR\cup\{-\infty\}=:\uR$,
and that we have $\sup K<\infty$.
Also note that a kernel function is almost everywhere differentiable.

We say that the kernel function $K$ is \emph{strictly concave}
if it is strictly concave on both of
the intervals $(-1,0)$ and $(0,1)$.

Further, we call it \emph{monotone}\footnote{These conditions---and
more, like $C^2$ smoothness and strictly negative second
derivatives---were assumed on the kernel functions
in the
paper of Fenton \cite{Fenton}.}
if
\begin{equation}
\label{cond:monotone}\tag{M}
K \text{ is nonincreasing on } (-1,0) \text{ and nondecreasing on } (0,1).
\end{equation}
By concavity, under the monotonicity condition
\eqref{cond:monotone} the endpoint values $K(-1)$, $K(1)$
are also finite.
If $K$ is strictly concave, then \eqref{cond:monotone}
implies \emph{strict monotoni\-ci\-ty}:
\begin{equation}
\label{cond:smonotone}\tag{SM}
K \text{ is strictly decreasing on } [-1,0)
\text{ and strictly increasing on } (0,1],
\end{equation}
where we have extended the assertion to the finite endpoint values, too.

A kernel function $K$ is called \emph{singular} if
\begin{equation}\label{cond:infty}
\tag{$\infty$}
K(0)=-\infty.
\end{equation}

Let $n\in \NN=\{1,2,\dots\}$ be fixed.
We will call a function $J:[0,1]\to\uR$
an \emph{external $n$-field function}\footnote{Again,
the terminology of kernels and fields came to our mind by analogy,
which in case of the logarithmic kernel $K(t):=\log|t|$ and
an external field $J(t)$ arising from a weight $w(t):=\exp(J(t))$
are indeed discussed in logarithmic potential theory.
However, in our analysis no further potential theoretic notions
and tools will be applied.
This is so in particular because our analysis is far more general,
allowing different and almost arbitrary kernels and fields;
yet the resemblance to the classical settings of logarithmic potential
theory should not be denied.},
or---if the value of $n$ is unambiguous from the
context---simply a \emph{field function},
if it is bounded above  on $[0,1]$,
and it assumes finite values at more than  $n$ different points,
where we count the points $0$ and $1$ with
weight\footnote{The weighted counting makes a difference only
for the case when $J^{-1}(\{-\infty\})$ contains the two endpoints;
with only $n-1$ further interior points in $(0,1)$
the weights in this configuration add up to $n$ only,
hence the node system is considered inadmissible.} $1/2$ only,
while the points in $(0,1)$ are accounted for with weight $1$.
Therefore, for a field function $J$ the set  $(0,1)\setminus J^{-1}(\{-\infty\})$
has at least $n$ elements, and if it has precisely $n$ elements,
then either $J(0)$ or $J(1)$ is finite, too.

\medskip

Further, we consider the \emph{open simplex}
\[
S:=S_n:=\{ \yy=
(y_1,\dots,y_n)\in (0,1)^n,\: 0< y_1<\cdots <y_n<1\},
\]
and its closure the \emph{closed simplex}
\[
\overline{S}:=\{
\yy=
(y_1,y_2,\ldots,y_n)\in [0,1]^n:\
0\leq y_1\leq \cdots \leq y_n\leq 1\}.
\]

\medskip

For any given $n\in \NN$,
kernel function $K$, constants $\nu_j>0$, $(j=1,\ldots,n)$,
and a given field function $J$
we will consider the \emph{pure sum of translates function}
\begin{equation*}
f(\yy,t):=\sum_{j=1}^n \nu_j K(t-y_j)\quad (\yy\in \overline{S},\: t\in [0,1]),
\end{equation*}
and also the \emph{(weighted) sum of translates function}
\begin{equation*}
F(\yy,t):=J(t)+\sum_{j=1}^n \nu_jK(t-y_j)\quad (\yy\in \overline{S},\: t\in [0,1]).
\end{equation*}

Note that the functions $J, K$ can take the value $-\infty$,
but not $+\infty$,
therefore
both 
sum of translates functions can be defined meaningfully.
Furthermore, $f:\oS \times [0,1] \to \uR$
is extended continuous, and $F(\yy,\cdot)$ is not constant $-\infty$,
hence $\sup_{t\in[0,1]}F(\yy,t)>-\infty$
holds.\footnote{These require some careful considerations and
the assumed degree of nonsingularity of $J$ is in fact \emph{the exact condition} to ensure $F \not\equiv -\infty$. For details see \cite{Homeo,Vegtelen}.}

We introduce the \emph{singularity set} of the field function $J$ as
\begin{equation*}
X:=X_J:=\{t\in [0,1]~:~J (t)=-\infty\},
\end{equation*}
and note that the so-called \emph{finiteness domain} of $J$,
$X^c:=[0,1]\setminus X$ has cardinality exceeding $n$
(in the above described, weighted sense),
in particular $X\neq [0,1]$.
Similarly, the singularity set of $F(\yy,\cdot)$ is
\[
\widehat{X}:=
\widehat{X}(\yy):=
\{t\in[0,1]~:~F(\yy,t)=-\infty\} \varsubsetneq [0,1].
\]
Accordingly, an interval $I\subseteq [0,1]$
with $I\subseteq \widehat{X}(\yy)$ will be called \emph{singular}.

Writing $y_0:=0$ and $y_{n+1}:=1$  we also set for
each $\yy\in \overline{S}$ and $j\in \{0,1,\dots, n\}$
\begin{align*}
I_j(\yy):=[y_j,y_{j+1}], \qquad
m_j(\yy):=\sup_{t\in I_j(\yy)} F(\yy,t),
\end{align*}
and
\begin{align*}
\mol(\yy):=\max_{j=0,\dots,n} m_j(\yy)=\sup_{t\in [0,1]}F(\yy,t),
\qquad \mul(\yy):=\min_{j=0,\dots,n} m_j(\yy).
\end{align*}
Of interest are the simplex minimax
and simplex maximin values
which are defined as follows
\begin{equation*}
M(S):= \inf_{\yy \in S} \mol(\yy),
\qquad
m(S):= \sup_{\yy \in S}\mul(\yy).
\end{equation*}

As has been said above,
for each $\yy\in \oS$
we have that $\mol(\yy)=\sup_{t \in [0,1]} F(\yy,t)\in \RR$
is finite.
Observe that an interval $I\subseteq [0,1]$ is contained in $\widehat{X}(\yy)$,
i.e., $I$ is singular,
if and only if $F(\yy,\cdot)|_I\equiv -\infty$.
In particular $m_j(\yy)=-\infty$ exactly when $I_j(\yy)\subseteq \widehat{X}(\yy)$.
A node system $\yy$ is called \emph{singular}
if there is $j\in \{0,1,\dots,n\}$ with $I_j(\yy)$ singular,
i.e., $m_j(\yy)=-\infty$;
and a node system $\yy\in \partial S= \oS\setminus S$ is called \emph{degenerate}.

An essential role is played by the \emph{regularity set}
(set of regular node systems)
\begin{align*}
Y:=Y_n& :=Y_n(X):=
\{\yy\in \oS: \yy \text{ is nonsingular}\}
 \\
&=\{\yy\in \oS: I_j(\yy)\not\subseteq \widehat{X}(\yy)
\text{ for } j=0,1,\dots,n\}
\\
&=\{\yy\in \oS:
m_j(\yy)\neq-\infty \text{ for } j=0,1,\dots,n\}.
\end{align*}
An important observation is that the regularity
set does not depend on the kernel function $K$,
but on the set where $K$ is $-\infty$ (which is subset of $\{-1,0,1\}$).
Similarly, it only depends on the singularity set
$X_J$
of $J$,
but not on the actual function $J$ itself.
If
$K$ is nonsingular and is finite valued at
$\pm 1$, too,
then  we necessarily have $\widehat{X}=X_J$,
and the notion of singularity of intervals, hence of node systems,
becomes totally independent of the kernel $K$ itself.
On the other hand if the kernel $K$ is singular,
then all degenerate node systems are outright singular, thus $Y\subset S$.
Note also that we have $S\subset Y$ if and only if $X$
(or equivalently $\widehat{X}$,
which differs from it only by a finite number of points,
if at all) has empty interior.
In particular, if $X$ has empty interior and $K$ is singular, then $Y=S$.

We also introduce the \emph{interval maxima vector function}
\[
\mv(\ww):=(m_0(\ww),m_1(\ww),\ldots,m_n(\ww)) \in \uR^{n+1} \quad (\ww \in \oS).
\]
From the above it follows that for $\ww\in \oS$
we have $\mv(\ww)\ne (-\infty,\ldots,-\infty)$.

\begin{defn}[Intertwining of maxima]
\label{def:intertwining}
 Let $J:[0,1]\to \uR$ be an $n$-field function,
$K$ be a kernel function and $\nu_i>0$, $i=1,\ldots,n$ be given constants.
We say that for this system \emph{intertwining of maxima} holds,
if for any two different
regular node systems $\xx,\yy\in Y$
both $m_i(\xx)>m_i(\yy)$ and $m_j(\xx)<m_j(\yy)$
occur for some indices $i,j \in \{0,1,\ldots,n\}$.
\end{defn}

In our earlier papers we mentioned this same property
under the terminology that \enquote{majorization does not occur},
or simply
\enquote{nonmajorization property}.
Majorization means for two node systems $\xx, \yy$
that $\mv(\xx)\ge \mv(\yy)$,
i.e.~$m_i(\xx)\ge m_i(\yy)$ for all $i=0,1,\ldots,n$.

For singular kernels, like $\log|t|$,
the intertwining
property was established under suitable assumptions---see the
discussion in Section \ref{sec:earlierresults} below.
In particular, exponentiating one of our results,
we obtained the following---to the best of our knowledge, new---observation.
The sequence of (local) absolute value maxima
on $[0,1]$ of the monic polynomials $P(\xx,t):=\prod_{j=1}^n (t-x_j)$
for different node systems can never majorize each other.

In our earlier results on intertwining an essential role was played
by a so-called \enquote{Homeomorphism Theorem},
proved for singular kernels
in \cite{Homeo}.
If the considered kernels are nonsingular,
then this fundamental tool is no longer available.
Also, other properties, like continuity of the $m_i(\xx)$,
were obtained by heavy use of the singularity assumption \eqref{cond:infty}.
Therefore, it was not clear what happens for general,
not necessarily singular kernels.
On the other hand Fenton formulated his results in \cite{Fenton}
for possibly nonsingular kernels
(even if under several other restrictive assumptions),
so  investigating the nonsingular case came as natural.
Here is what we could prove.

\begin{thm}[Nonsingular intertwining]
\label{thm:Intertwining-123}
Let $n\in \{1,2,3\}$, $\nu_1,\dots,\nu_n>0$, let $K$ be a strictly concave and (strictly)
monotone \eqref{cond:smonotone} kernel function and
let $J$ be an upper semicontinuous $n$-field function.

Then intertwining of maxima holds,
and there is a unique  equioscillation point i.e.,
a node system $\ww$ for which $m_0(\ww)=m_1(\ww)=\dots=m_n(\ww)$.
\end{thm}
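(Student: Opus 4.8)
The plan is to reduce the nonsingular case to structures that mimic the tools available in the singular case, working separately on the two halves of the assertion: (a) intertwining (nonmajorization), and (b) existence and uniqueness of an equioscillation point. For (a), I would argue by contradiction: suppose $\xx\neq\yy$ in $Y$ with $\mv(\xx)\geq\mv(\yy)$, i.e.\ $m_j(\xx)\geq m_j(\yy)$ for all $j$. The first step is to observe that under strict concavity and strict monotonicity \eqref{cond:smonotone}, on each nonsingular interval $I_j$ the function $F(\yy,\cdot)$ is strictly unimodal (strictly increasing then strictly decreasing, or monotone throughout if the maximizing $t$ falls at an endpoint); hence the interval maximum $m_j(\yy)$ is attained at a single point $t_j(\yy)$, and one gets a clean description of how $m_j$ reacts to moving a single node $y_i$ left or right — the familiar ``push one node, some maxima go up, their neighbours go down'' monotonicity that drives all such intertwining arguments. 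The second step is a homotopy/continuity argument: since $n\leq 3$ the simplex is low-dimensional, and I would move from $\yy$ to $\xx$ along the straight segment (or a polygonal path inside $Y$, dodging the singular set $X$, which has empty interior on the relevant intervals once both endpoints are regular), tracking the sign pattern of $m_j(\xx)-m_j(\yy)$; the ordered, interlaced way in which the $m_j$ depend on the coordinates $y_1<\dots<y_n$ forces that one cannot have all differences weakly of one sign unless $\xx=\yy$. Concretely, for $n=1$ this is immediate: $m_0$ is nonincreasing in $y_1$ while $m_1$ is nondecreasing, and strict monotonicity makes both strict, so one goes up and the other down. For $n=2,3$ I would do the same bookkeeping coordinate by coordinate, using that the leftmost maximum $m_0$ depends only on $y_1$ (monotone decreasing), the rightmost $m_n$ only on $y_n$ (monotone increasing), and the interior ones are jointly strictly monotone in the two adjacent nodes in opposite directions; chaining these relations around the would-be majorization yields a contradiction.

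For (b), existence of an equioscillation point, the plan is to apply a fixed-point / minimax argument on the compact set $\oS$ (or on the closure of $Y$). The standard route: show that the map $\yy\mapsto \mv(\yy)$ is upper semicontinuous (this uses only upper semicontinuity of $J$ and extended continuity of $K$, both assumed), and that the maximin value $m(S)$ equals the minimax value $M(S)$ — the equality being exactly the assertion that some $\ww$ has $m_0(\ww)=\cdots=m_n(\ww)$. I would realise this via a Poincaré–Miranda / degree-theoretic statement on the $n$-simplex: consider the continuous-where-finite vector field $\yy\mapsto (m_1(\yy)-m_0(\yy),\dots,m_n(\yy)-m_{n-1}(\yy))$, check that on each facet of $\oS$ the appropriate component has a definite sign (a node colliding with a neighbour or with an endpoint makes a certain interval shrink, forcing $m_j$ up or down relative to its neighbour), and conclude a zero in the interior. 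Alternatively, and perhaps more robustly for $n\leq 3$, I would take a maximiser $\ww$ of $\mul$ over $\oS$ (which exists by upper semicontinuity and compactness), argue it lies in $Y$ (a singular interval would give $\mul=-\infty$, contradicting that $m(S)>-\infty$ for some configuration), and then show that if $\mul(\ww)<\mol(\ww)$ one can locally perturb a node to raise the minimum — contradiction; this is where strict concavity is essential, as it guarantees the strict monotonicity of each $m_j$ under node motion that powers the perturbation.

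Uniqueness of the equioscillation point is then the third step, and I expect it to follow from (a): if $\ww$ and $\ww'$ both equioscillate with common values $c$ and $c'$, then either $c=c'$, in which case $\mv(\ww)=\mv(\ww')$ contradicts intertwining (which forbids even $\mv(\ww)\geq\mv(\ww')$ for $\ww\neq\ww'$), or say $c<c'$, in which case $\mv(\ww)\leq\mv(\ww')$ coordinatewise, again contradicting intertwining; so $\ww=\ww'$.

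The main obstacle I anticipate is precisely the place where the singular theory used the ``Homeomorphism Theorem'' as a black box: here, without \eqref{cond:infty}, the interval maxima $m_j(\xx)$ need not even be continuous up to the boundary, maximising points can jump to endpoints, and intervals can become singular at the boundary of $Y$. Handling this requires (i) carefully identifying on which subintervals $F(\yy,\cdot)$ is genuinely strictly unimodal versus merely monotone, so that the ``push a node'' lemmas have the right strict/weak form, and (ii) controlling what happens as a path in $Y$ approaches $\partial S$ or the singular set $X$. It is for managing these boundary pathologies — the loss of the homeomorphism machinery — that the restriction $n\in\{1,2,3\}$ is imposed: in low dimension the facet-by-facet sign analysis and the path-dodging can be carried out by hand, whereas for general $n$ the combinatorics of which maxima are pinned to endpoints becomes unmanageable without a substitute for the Homeomorphism Theorem.
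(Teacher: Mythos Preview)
Your proposal has genuine gaps at the very first step of part (a), and the approach does not match what the paper does.

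First, the claim that $F(\yy,\cdot)$ is ``strictly unimodal'' on each $I_j(\yy)$ is false in general: the pure sum $f(\yy,\cdot)$ is indeed concave on $I_j$, but $J$ is only assumed upper semicontinuous, so $F=J+f$ need not be unimodal, need not have a unique maximiser, and the ``push one node, neighbouring maxima react monotonically'' heuristic cannot be justified this way. Second, your structural claim that ``$m_0$ depends only on $y_1$, $m_n$ only on $y_n$, and interior $m_j$ only on the two adjacent nodes'' is wrong: $m_j(\yy)=\sup_{I_j(\yy)}\bigl(J(t)+\sum_{i=1}^n\nu_iK(t-y_i)\bigr)$ depends on \emph{all} coordinates of $\yy$ through the sum, not just on the endpoints of $I_j$. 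Third, even in the $n=1$ case your monotonicity is backwards: under \eqref{cond:smonotone}, for $t\le x<y$ one has $K(t-x)<K(t-y)$ (the argument $t-x\in(-1,0)$ becomes more negative as the node moves right, and $K$ is decreasing there), so $m_0(\xx)<m_0(\yy)$ and $m_1(\xx)>m_1(\yy)$; that is, $m_0$ \emph{increases} and $m_n$ \emph{decreases} as the node moves right. The ``chaining'' argument you sketch for $n=2,3$ therefore rests on incorrect premises and would not go through.

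The paper proceeds entirely differently. Existence of an equioscillation point is simply quoted from an earlier theorem (Theorem~\ref{thm:mikro}), and uniqueness is derived from intertwining exactly as you say. For intertwining itself, the paper first disposes of the ``comparable'' case $\xx\le\yy$ by a direct monotonicity argument (your $n=1$ case, correctly oriented, generalised), and of the ``shared coordinate'' case $x_i=y_i$ by reducing to $n-1$ nodes with the modified field $J^*=J+\nu_iK(\cdot-x_i)$. The remaining configurations for $n=2,3$ are then handled by a key algebraic inequality (Lemma~\ref{lem:widening}): for nested pairs $\alpha<a<b<\beta$ one compares $pK(t-\alpha)+qK(t-\beta)$ with $pK(t-a)+qK(t-b)$ and obtains a strict inequality on $[0,\alpha]$ or on $[\beta,1]$ depending on the ratio $\kappa=p(a-\alpha)/q(\beta-b)$. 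This lemma, combined with carefully chosen \emph{intermediate} node systems (for $n=3$ one inserts auxiliary systems like $\zz=(y_1,x_2,x_3)$ and $\ww=(y_1,y_2,x_3)$, or a balanced $\zz=(x_1,y_2,z_3)$ with $z_3$ chosen so that $\kappa=1$), yields the required strict inequalities $m_j(\xx)<m_j(\yy)$ and $m_k(\xx)>m_k(\yy)$ by a finite case analysis. No unimodality, no homotopy, no Poincar\'e--Miranda argument is used.
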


In other words, the first assertion of this theorem states that
majorization---$m_i(\xx) \ge m_i(\yy)$ for all $i=0,1,\ldots,n$---cannot hold,
unless $\xx=\yy$.
The existence of an equioscillation point follows
from our earlier results, recalled as Theorem \ref{thm:mikro} below.
\section{Some earlier results and a conjecture}
\label{sec:earlierresults}

Let us recall two of our earlier results on the behavior of $\mv$
in case of singular kernels
which put  into context
Theorem \ref{thm:Intertwining-123} and Conjecture \ref{conj:intertwining} below.
The first one is a combination of
Corollary 3.2 and Corollary 4.2 of \cite{Vegtelen}.
\begin{thm}
Let $n\in\NN$, $\nu_1,\ldots,\nu_n>0$, let $K$ be a singular \eqref{cond:infty}
and monotone \eqref{cond:monotone} kernel function, and
let $J$ be an upper semicontinuous $n$-field function.

Then $M(S)=m(S)$ and there exists some node system $\ww\in Y$ at which
 the simplex maximin and the simplex minimax
are
attained:
\[
\mul(\ww)=m(S)=M(S)=\mol(\ww).
\]
 The node system $\ww$ is an equioscillation point.

\medskip\noindent Moreover, there are no $\xx, \yy \in Y$
with $m_j(\xx)>m_j(\yy)$ for every $j\in \{0,1,\ldots,n\}$. For any equioscillation point $\ee$ we have $\mol(\ee)=M(S)$.
\end{thm}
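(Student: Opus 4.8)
The plan is to derive the whole statement from two facts that hold for singular \eqref{cond:infty} and monotone \eqref{cond:monotone} kernels. First, that there is an \emph{equioscillation point}: a regular node system $\ww\in Y$ with $m_0(\ww)=m_1(\ww)=\dots=m_n(\ww)=:c$. For this I would invoke the Homeomorphism Theorem of \cite{Homeo}, in the form that the map $Y\to\RR^n$, $\yy\mapsto\bigl(m_1(\yy)-m_0(\yy),\dots,m_n(\yy)-m_{n-1}(\yy)\bigr)$, is a homeomorphism onto $\RR^n$; then $\ww$ is the (unique) preimage of the origin. Alternatively a degree/fixed-point argument works, running a continuous ``balancing'' flow that widens the interval carrying $\mul(\yy)$ at the expense of its neighbours and has nowhere to escape. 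Second, the \emph{no strict domination} property: there are no $\xx,\yy\in Y$ with $m_j(\xx)>m_j(\yy)$ for every $j\in\{0,1,\dots,n\}$. This is the heart of the matter and I return to it below.

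Granting these, the theorem follows. Since $K$ is singular, $Y\subset S$, so $\ww\in S$, whence $m(S)\ge\mul(\ww)=c$ and $M(S)\le\mol(\ww)=c$. For the reverse inequalities I use ``no strict domination'' with $\ww$ as one of the two node systems: if some $\xx\in Y$ had $\mul(\xx)>c$, then $m_j(\xx)>c=m_j(\ww)$ for all $j$, which is forbidden, while every non-regular $\xx\in S\setminus Y$ has $\mul(\xx)=-\infty\le c$; hence $m(S)\le c$. Dually, $\ww$ cannot strictly dominate any $\xx\in Y$, so $\mol(\xx)\ge c$ there; and for non-regular $\xx\in S\setminus Y$ one checks $\mol(\xx)\ge c$ as well, using that $\mol$ is upper semicontinuous on $\oS$ (it is the maximum over $t$ of the jointly upper semicontinuous $F$) together with a monotonicity/limiting argument reducing to regular node systems---this edge case, occurring only when $X_J$ has nonempty interior, is where most of the bookkeeping sits beyond the main point. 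Thus $m(S)=M(S)=c$, attained at $\ww$ with $\mul(\ww)=\mol(\ww)=c$, so $\ww$ is an equioscillation point, and the ``moreover'' assertion about the absence of strictly dominating pairs is exactly ``no strict domination''. Finally, for any equioscillation point $\ee$ with common value $c'$: it can neither strictly dominate $\ww$ (ruling out $c'>c$) nor be strictly dominated by $\ww$ (ruling out $c'<c$), so $c'=c$ and $\mol(\ee)=c'=c=M(S)$; in fact the Homeomorphism Theorem then forces $\ee=\ww$.

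The step I expect to be the real obstacle is ``no strict domination''. Here is the argument I would attempt: suppose $\xx\ne\yy$ in $Y$ with $m_j(\xx)>m_j(\yy)$ for all $j$, and set $D(t):=F(\xx,t)-F(\yy,t)=f(\xx,t)-f(\yy,t)$ (the field $J$ cancels). On each of its own subdivision intervals $F(\xx,\cdot)$---and likewise $F(\yy,\cdot)$---is the sum of the upper semicontinuous $J$ and a concave function, since there every translate $t\mapsto K(t-x_i)$ keeps its argument inside one of $(-1,0)$, $(0,1)$, where $K$ is concave; moreover \eqref{cond:infty} forces $D(t)\to-\infty$ at each $\xx$-node that is not a $\yy$-node and $D(t)\to+\infty$ at each $\yy$-node that is not an $\xx$-node, while \eqref{cond:monotone} controls the finite behaviour of $D$ near $0$ and $1$. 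Choosing for each $j$ points $\sigma_j\in I_j(\xx)$ with $F(\xx,\sigma_j)=m_j(\xx)$ and $\tau_j\in I_j(\yy)$ with $F(\yy,\tau_j)=m_j(\yy)$ (they exist as $\xx,\yy\in Y$ and $F$ is upper semicontinuous), strict domination gives $D(\sigma_j)=m_j(\xx)-F(\yy,\sigma_j)>0$ whenever $\sigma_j\in I_j(\yy)$ and $D(\tau_j)=F(\xx,\tau_j)-m_j(\yy)<0$ whenever $F(\xx,\tau_j)<m_j(\yy)$; tracking how the $\sigma_j$ and $\tau_j$ interleave along $[0,1]$ one produces more forced sign changes of $D$ than its structure---piecewise concavity plus the prescribed pattern of $\pm\infty$ spikes at the at most $2n$ nodes, which caps the number of sign changes---can accommodate, a contradiction. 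Carrying out this sign-change bookkeeping precisely, where both \eqref{cond:infty} and \eqref{cond:monotone} are genuinely used, is the hard part; it is done in \cite{Vegtelen} (Corollaries~3.2 and~4.2).
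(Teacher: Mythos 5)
The paper gives no proof of this theorem; it is quoted directly as a combination of Corollaries~3.2 and~4.2 of~\cite{Vegtelen}, so there is no in-paper argument to compare your attempt against.

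Your reduction---deduce everything from (i) existence of an equioscillation point $\ww\in Y$ and (ii) ``no strict domination'' in $Y$---is sound in outline, and citing the Homeomorphism Theorem of~\cite{Homeo} for (i) is appropriate here, since that theorem is stated precisely under the singularity hypothesis \eqref{cond:infty} which is in force. The gap is in your sketch of (ii). You set $D=f(\xx,\cdot)-f(\yy,\cdot)$ and claim its number of sign changes is capped by ``piecewise concavity'' plus the $\pm\infty$ spikes at the nodes. That claim is unfounded: $D$ is a \emph{difference} of two functions that are each piecewise concave in $t$, and a difference of concave functions inherits no concavity and no a priori bound on oscillation. Already a single term $K(t-x)-K(t-y)$ is strictly convex on a branch for $K(s)=-e^{s}$, concave for $K(s)=\log|s|$, and in general neither; so nothing structural caps the zeros of $D$ between consecutive spikes. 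Moreover, your inequalities $D(\sigma_j)>0$ and $D(\tau_j)<0$ only hold under the side conditions you yourself flag (e.g.\ $\sigma_j\in I_j(\yy)$), which need not occur, so even the ``forced'' sign changes are not forced. The route actually taken in~\cite{Vegtelen} and~\cite{mikrofentongen}---and mirrored in the present paper's proof of Theorem~\ref{thm:Intertwining-123}---is a one-node-at-a-time perturbation via the widening Lemma~\ref{lem:widening}, passing through intermediate node systems and tracking how the individual $m_j$ respond, not a global zero-count on $D$. A lesser but real issue: your claim $\mol(\xx)\ge c$ for singular $\xx\in S\setminus Y$ is deferred to ``a monotonicity/limiting argument''; upper semicontinuity of $\mol$ only helps if such $\xx$ is a limit of regular node systems, which is not automatic when $X_J$ has nonempty interior and requires a separate argument.
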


\begin{rem}
\begin{enumerate}[(a)]
\item
If there is an equioscillation point, then $M(S)\leq m(S)$.
Indeed, if $\ee$ is an equioscillation point, then $\ee\in Y$
and $M(S)\le \mol(\ee)$, $\mol(\ee)=\mul(\ee)$, $\mul(\ee)\le m(S)$.
Note that, for this we do not use any of the special properties
of the kernel function or the field function.

\item
If there is
intertwining
on $\oS$, then
$M(S)\geq m(S)$.
Indeed, if $M(S)< m(S)$, that is $\inf_\xx \mol(\xx)< \sup_\yy \mul(\yy)$,
then there are $\xx,\yy\in \oS$ such that
$\mol(\xx)<\mul(\yy)$ which entails
that $\mv(\yy)$ strictly majorizes $\mv(\xx)$.
This observation again uses no properties
of $K$ and $J$.
\end{enumerate}
\end{rem}
The following, second theorem (Theorem 2.4 from \cite{mikrofentongen})
can be viewed as the sharpest result in this direction.

\begin{thm}\label{thm:mikro}
Let $n\in\NN$, $\nu_1,\ldots,\nu_n>0$, let
$K$ be a \emph{monotone} \eqref{cond:monotone} kernel function
and $J$ be an arbitrary $n$-field function.
Then $M(S)=m(S)$ and there exists some node system $\ww\in \oS$ at which the simplex minimax is attained:
\[
\mol(\ww)=M(S).
\]

\medskip\noindent Moreover, there are no $\xx, \yy \in Y$
with $m_j(\xx)>m_j(\yy)$ for every $j\in \{0,1,\ldots,n\}$.
For any equioscillation point $\ee$ we have $\mol(\ee)=M(S)$,
and if $J$ is upper semicontinuous or $K$ is singular,
then, in fact, there exists an equioscillation point.
\end{thm}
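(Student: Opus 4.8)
The plan, following \cite{mikrofentongen}, is to build everything on a single-node perturbation lemma, to extract the non-majorization statement from it, and then to deduce $M(S)=m(S)$, the attainment of the minimax, and the properties of equioscillation points. \emph{Step 1: the perturbation lemma.} First I would establish the monotone dependence of the interval maxima on the individual nodes: if $\zz'\in\oS$ arises from $\zz\in\oS$ by moving a single interior node $z_j$ rightward (keeping the others, so $z_j<z_j'\le z_{j+1}$), then $m_k(\zz')\ge m_k(\zz)$ for $k\le j-1$ and $m_k(\zz')\le m_k(\zz)$ for $k\ge j$, with the mirror statement for a leftward move. On each interval this is a pointwise comparison of $F(\zz',\cdot)$ with $F(\zz,\cdot)$: for $t$ to the left of the moved node $K(t-z_j')\ge K(t-z_j)$ because $K$ is nonincreasing on $(-1,0)$ by \eqref{cond:monotone}, while for $t$ to the right $K(t-z_j')\le K(t-z_j)$ because $K$ is nondecreasing on $(0,1)$; on the two intervals adjacent to the moved node the interval itself also grows or shrinks, but in the direction concordant with the pointwise change, so passing to suprema is harmless. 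The immediate corollary I want is the ``staircase'' monotonicity of $\mv$ on $\oS$: each $m_j$ is nonincreasing in $z_1,\dots,z_j$ and nondecreasing in $z_{j+1},\dots,z_n$; in particular $m_0$ is coordinatewise nondecreasing and $m_n$ coordinatewise nonincreasing. I would also record the soft fact that $f$ is jointly (extended) continuous, so that $\mol(\zz)=\sup_t\bigl(J(t)+f(\zz,t)\bigr)$ is a real-valued lower semicontinuous function on the compact set $\oS$.

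\emph{Step 2: no strict majorization.} Here I would argue by contradiction. Assume $\xx,\yy\in Y$ are distinct with $m_j(\xx)>m_j(\yy)$ for every $j$. From $m_0(\xx)>m_0(\yy)$ and the coordinatewise monotonicity of $m_0$, $\yy$ cannot dominate $\xx$ coordinatewise, and from $m_n$ it cannot be dominated by $\xx$; hence $\xx$ and $\yy$ are incomparable. Next I would transform $\yy$ into $\xx$ by moving the nodes one at a time, at each step correcting the currently leftmost discrepancy; the perturbation lemma then pins the sign of the change of \emph{every} coordinate of $\mv$ at that step. Following this sign bookkeeping along the whole chain of moves forces some coordinate $m_j$ to fail to increase strictly, contradicting $m_j(\xx)>m_j(\yy)$. (An equivalent packaging: locate an intermediate node system $\zz$ on the chain, strictly closer to $\xx$ than $\yy$ is, with $m_j(\zz)\le m_j(\yy)$ for some $j$, and iterate; a discrete intermediate-value argument on the index of the first node that the current system has overtaken past $\xx$ does the job.) This is the combinatorial heart of the proof.

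\emph{Step 3: $M(S)=m(S)$ and attainment of the minimax.} The inequality $M(S)\ge m(S)$ is now immediate: were $M(S)<m(S)$, there would be $\xx,\yy$ with $\mol(\xx)<\mul(\yy)$, so $\mv(\yy)$ would strictly majorize $\mv(\xx)$, against Step 2. For the reverse inequality together with the existence of $\ww\in\oS$ with $\mol(\ww)=M(S)$ I would induct on $n$. Since $\mol$ is lower semicontinuous on the compact $\oS$, it attains its minimum over $\oS$ at some $\ww_0$; if $\ww_0\in S$ we are done, and if $\ww_0\in\partial S$, then on the corresponding face --- a node collision, or a node sitting at $0$ or $1$ --- the sum of translates collapses to an $(n-1)$-node problem with merged weights, to which the inductive hypothesis applies, while the perturbation lemma, which controls how $\mol$ moves as merged nodes are separated or a node is pulled off an endpoint, transfers the conclusion back. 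The base case $n=1$ is checked directly. It is precisely here that the arbitrariness of $J$ bites: $\mol$ need not be \emph{upper} semicontinuous, so plain compactness does not pin down the minimax, and the induction together with Step 1 is what absorbs this.

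\emph{Step 4: equioscillation points, and the main obstacle.} If $\ee$ is an equioscillation point, every $m_j(\ee)$ equals the finite number $\mol(\ee)$, so $\ee\in Y$; and no $\yy\neq\ee$ can satisfy $\mol(\yy)<\mol(\ee)$, since then $m_j(\yy)\le\mol(\yy)<\mol(\ee)=m_j(\ee)$ for all $j$, against Step 2. Hence $\mol(\ee)=M(S)$, and combining with $\mul(\ee)=\mol(\ee)$ and the trivial chain $\mul(\ee)\le m(S)\le M(S)\le\mol(\ee)$ gives $\mol(\ee)=M(S)=m(S)$. For existence of an equioscillation point: if $K$ is singular it follows from the results recalled at the beginning of Section \ref{sec:earlierresults} (ultimately from the Homeomorphism Theorem of \cite{Homeo}); if $J$ is upper semicontinuous, then the supremum defining each $m_j$ is attained with the $\limsup$ of $J$ along near-maximizers controlled, so the $m_j$ acquire enough regularity to run, exactly as in the singular case, a Brouwer/Poincar\'e--Miranda fixed-point argument on the self-map of $\oS$ that balances consecutive interval maxima, producing an equioscillation point. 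The main obstacles are the global sign analysis of Step 2 --- the ``crossing''/interlacing argument that upgrades one-node monotonicity to a contradiction --- and, right behind it, the inductive boundary reduction of Step 3 that has to cope with a merely upper-bounded $J$.
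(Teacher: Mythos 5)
First, a point of comparison: this theorem is not proved in the present paper at all --- it is recalled verbatim as Theorem 2.4 of \cite{mikrofentongen} and used as an ingredient --- so your proposal can only be measured against that cited proof and on its own merits; and on its own merits it has a genuine gap exactly at what you call the combinatorial heart, Step 2. Your Step 1 (single-node monotonicity of the $m_k$'s under \eqref{cond:monotone}) is correct, but it is too weak to support the announced ``sign bookkeeping'': every single-node move raises \emph{all} interval maxima on one side of the moved node and lowers \emph{all} of them on the other, so when the discrepancies $x_i-y_i$ have mixed signs the composed moves push in opposite directions and the net sign of most coordinates is undetermined. Concretely, take $n=2$ with $x_1<y_1<y_2<x_2$ (the only genuinely incomparable pattern). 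Correcting the leftmost discrepancy first means moving $y_1$ down to $x_1$ (so $m_0$ decreases, $m_1$ and $m_2$ increase) and then $y_2$ up to $x_2$ (so $m_0$ and $m_1$ increase, $m_2$ decreases); the composition yields only $m_1(\xx)\ge m_1(\yy)$ and gives no information on $m_0$ or $m_2$, which is perfectly consistent with $m_j(\xx)>m_j(\yy)$ for \emph{every} $j$ --- no contradiction appears, already for $n=2$, and reordering the moves does not help. What is actually needed is a \emph{balanced simultaneous} move of a pair of nodes, i.e.\ Lemma \ref{lem:widening} with the ratio $\kappa$ of \eqref{eq:mudef}: widening $[y_1,y_2]$ to $[x_1,x_2]$ gives $f(\xx,\cdot)\le f(\yy,\cdot)$ on $[0,x_1]$ or on $[x_2,1]$ according to $\kappa\ge1$ or $\kappa\le1$, whence $m_0(\xx)\le m_0(\yy)$ or $m_2(\xx)\le m_2(\yy)$ (no attainment of suprema is needed for the non-strict inequality, which is why arbitrary $J$ is admissible). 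Composing such balanced pairwise moves for arbitrary sign patterns at large $n$ is precisely the difficulty that limits the intertwining result of this paper to $n\le3$ and keeps Conjecture \ref{conj:intertwining} open; the general-$n$ non-majorization in Theorem \ref{thm:mikro} is obtained in \cite{mikrofentongen} by a substantially more involved argument, not by bookkeeping single-node moves.

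Steps 3 and 4 are likewise asserted rather than proved. Lower semicontinuity of $\mol$ does give a minimum point on $\oS$, but the real content --- the inequality $M(S)\le m(S)$, the identification of that minimum with $\inf_S\mol$ for a merely upper-bounded, possibly non-usc $J$, and the existence of an equioscillation point for usc $J$ when $K$ is nonsingular --- is exactly what requires the technical work of \cite{mikrofentongen}; ``the perturbation lemma transfers the conclusion back'' and ``a Brouwer/Poincar\'e--Miranda argument on the map balancing consecutive maxima'' are placeholders, the latter presupposing continuity properties of the $m_j$'s that the paper explicitly says are unavailable without the singularity condition \eqref{cond:infty}. A smaller point: your argument that $\mol(\ee)=M(S)$ compares $\ee$ only with $\yy\in Y$, whereas $M(S)$ is an infimum over $S$, which may contain singular node systems not covered by the non-majorization statement.
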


Based on these latter theorems and Theorem
\ref{thm:Intertwining-123},
we put forward the general case as a conjecture:

\begin{conjecture}[Nonsingular Intertwining]
\label{conj:intertwining}
Let $n\in \NN$, $\nu_1,\ldots,\nu_n>0$,
let $K$ be a strictly concave and (strictly) monotone \eqref{cond:smonotone}
kernel function
and let $J$ be an
upper semicontinuous $n$-field function.
The conclusion of Theorem \ref{thm:Intertwining-123} remains true
even if $n>3$, in particular for $\xx,\yy\in Y$ the
coordinatewise inequality $\mv(\xx)\le \mv(\yy)$ implies $\xx=\yy$.
\end{conjecture}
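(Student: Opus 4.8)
\medskip
\noindent\emph{A plan of attack.}
We argue by contradiction. Negating intertwining (and, if necessary, swapping $\xx$ and $\yy$) means there are $\xx\ne\yy$ in $Y$ with $\mv(\xx)\le\mv(\yy)$ coordinatewise. Both assertions of Conjecture~\ref{conj:intertwining} then follow: non\hyp{}majorization is exactly the exclusion of such a pair, existence of an equioscillation point is already given by Theorem~\ref{thm:mikro} (since $J$ is upper semicontinuous), and its uniqueness follows because two equioscillation points have constant $\mv$-vectors, one dominating the other. So it suffices to derive a contradiction from a pair $\xx\ne\yy$ in $Y$ with $\mv(\xx)\le\mv(\yy)$. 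One first reduces to the interior case $\xx,\yy\in S$: if $X_J$ has empty interior then $Y=S$ and there is nothing to reduce; otherwise a degenerate regular node system has coinciding nodes, and coinciding nodes $y_k=\dots=y_{k+r}$ can be amalgamated into one node of weight $\nu_k+\dots+\nu_{k+r}$ (this leaves $f(\yy,\cdot)$ unchanged and merely deletes coordinates from $\mv(\yy)$ in a controlled way), so that a boundary counterexample should descend to a boundary counterexample on a lower-dimensional simplex or be approachable from inside. This reduction, together with the fact that for merely upper semicontinuous $J$ the maps $m_j$ are only upper semicontinuous in the node system, are exactly the points where one loses the comfort of the singular theory; I expect them to be routine in spirit but to require genuine care, and I treat $\xx,\yy\in S$ below.

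\medskip
\noindent Introduce the \emph{difference function}
\[
 g(t):=F(\yy,t)-F(\xx,t)=f(\yy,t)-f(\xx,t)=\sum_{j=1}^{n}\nu_j\,h_j(t),\qquad h_j(t):=K(t-y_j)-K(t-x_j),
\]
in which $J$ cancels; since $K$ is nonsingular and, by strict concavity together with \eqref{cond:smonotone}, finite at $0$ and at $\pm1$, the function $g$ is real-valued and continuous on $[0,1]$. The concrete technical core is the elementary lemma: for each $j$, $h_j$ is monotone on each of the three subintervals into which $\{x_j,y_j\}$ cuts $[0,1]$, and $h_j$ has \emph{exactly one} sign change on $[0,1]$, located in $[\min(x_j,y_j),\max(x_j,y_j)]$, passing from $+$ to $-$ if $x_j<y_j$ and from $-$ to $+$ if $x_j>y_j$ (and $h_j\equiv0$ if $x_j=y_j$); this is immediate from the monotonicity and concavity of $K$ on $(-1,0)$ and on $(0,1)$. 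Put $A:=\{\,j:x_j<y_j\,\}$ and $B:=\{\,j:x_j>y_j\,\}$; since $\xx\ne\yy$, $A\cup B\ne\varnothing$. Consider first the case $B=\varnothing$ and let $p:=\max A$. All nodes of index $>p$ are common to $\xx$ and $\yy$, so $I_p(\yy)=[y_p,y_{p+1}]\subseteq[x_p,y_{p+1}]=I_p(\xx)$, and on $[y_p,1]$ every surviving term $\nu_j h_j$ ($j\in A$) is negative, hence $g<0$ there; choosing $s^\ast\in I_p(\yy)$ with $F(\yy,s^\ast)=m_p(\yy)$ (finite, as $\yy\in Y$, and with $J(s^\ast)$ finite) gives $F(\yy,s^\ast)=F(\xx,s^\ast)+g(s^\ast)<F(\xx,s^\ast)\le m_p(\xx)$, i.e.\ $m_p(\yy)<m_p(\xx)$, contradicting $\mv(\xx)\le\mv(\yy)$. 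The case $A=\varnothing$ is symmetric (use $I_{q-1}$ for $q:=\min B$ and the interval $[0,y_q]$). Hence in any counterexample \emph{both} $A$ and $B$ are nonempty.

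\medskip
\noindent \emph{The hard case — and where the obstacle lies.} When $A\ne\varnothing\ne B$ no end-interval makes $g$ sign-definite, because the $A$-terms drive $g$ down while the $B$-terms drive it up; and this mixed situation is precisely what Theorem~\ref{thm:Intertwining-123} handles by an explicit discussion of the interleaving pattern of $A$- and $B$-indices, which is tractable for $n\le3$ but combinatorially explosive for larger $n$. What is missing is a \emph{uniform} argument. One natural attempt is to isolate the two contributions by interposing intermediate node systems — e.g.\ $\yy^{(B)}$, obtained from $\xx$ by sliding only the $B$-nodes to their $\yy$-positions — and to run the easy-case argument twice while bookkeeping the effect of a single-node slide on the interval maxima (sliding a node rightwards raises $m_j$ on intervals well to its left and lowers it on those well to its right, and conversely for a leftward slide). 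The honest difficulty is that this chain does not close on its own: sliding the $B$-nodes leftwards also \emph{raises} $m_p$ for $p=\max A$, which is the wrong direction, so additional structure is required; in our earlier singular work that structure was the Homeomorphism Theorem of \cite{Homeo}, unavailable here. I expect the right substitute to be a sharp sign-change count for $g$: writing $g(t)=-\int_0^1 K'(t-a)\,d\mu(a)$ with $\mu$ a signed measure carried by the ``moved'' intervals $[\min(x_j,y_j),\max(x_j,y_j)]$ with weights $\pm\nu_j$, and using that $K'$ is strictly decreasing on each half-interval with a single upward jump at $0$, one expects $g$ to change sign at most on the order of $|A|+|B|$ times; a discrete Rolle argument comparing the $n+1$ interval maxima of $F(\yy,\cdot)$ (separated by the ``valleys'' at the nodes $y_k$) with those of $F(\xx,\cdot)$ should then be incompatible with $\mv(\xx)\le\mv(\yy)$ unless $\xx=\yy$. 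Making this count tight, and robust under degenerate $\yy\in Y$ and merely upper semicontinuous $J$, is, I believe, the crux of the problem.
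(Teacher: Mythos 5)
There is a genuine gap, and you have in fact named it yourself: the case $A\neq\varnothing\neq B$ (node systems that are not coordinatewise comparable) is exactly the crux, and your proposal does not prove it --- the sign-change count for $g$ and the ``discrete Rolle'' comparison are only a heuristic sketch, with no argument showing that a bounded number of sign changes of $F(\yy,\cdot)-F(\xx,\cdot)$ is actually incompatible with $\mv(\xx)\le\mv(\yy)$. Note also the context: the statement you were asked to prove is stated in the paper as an open conjecture (Conjecture~\ref{conj:intertwining}); the paper itself only proves the cases $n\in\{1,2,3\}$ (Theorem~\ref{thm:Intertwining-123}), precisely because no uniform argument for the mixed case is known. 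The parts you do settle are genuine but are already in the paper: your $B=\varnothing$ (resp.\ $A=\varnothing$) argument is essentially Lemma~\ref{lemma:comparable}, and your amalgamation of coinciding nodes is Lemma~\ref{lemma:samenode}. What the paper does beyond that, for $n\le 3$ only, is a case-by-case construction of intermediate node systems combined with the widening Lemma~\ref{lem:widening} --- in particular the trick of choosing the intermediate system so that the parameter $\kappa$ in \eqref{eq:mudef} equals $1$, which yields inequalities on \emph{both} end intervals simultaneously; this is the structural input that replaces the Homeomorphism Theorem of the singular theory, and it is exactly what your sketch lacks for general $n$.

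Two further cautions if you pursue your route. First, your claim that one ``expects $g$ to change sign at most on the order of $|A|+|B|$ times'' needs proof; $K'$ exists only almost everywhere, and even granting a Descartes-type bound on sign changes of $\sum_j \nu_j\bigl(K(t-y_j)-K(t-x_j)\bigr)$, the passage from a sign-change bound to a contradiction with coordinatewise domination of the $n+1$ interval maxima is not routine, because the intervals $I_j(\xx)$ and $I_j(\yy)$ are different subdivisions of $[0,1]$ and the maxima need not be attained where $g$ has a convenient sign. Second, your reduction to $\xx,\yy\in S$ is not needed and is slightly misleading: when $K$ is nonsingular, $Y$ may well contain degenerate (boundary) node systems, and the paper's treatment never reduces to the open simplex --- coinciding nodes are handled directly by Lemma~\ref{lemma:samenode}, and the remaining arguments work verbatim for $\xx,\yy\in Y\subseteq\oS$. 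So the honest summary is: your proposal reproves the known easy cases and correctly locates the obstruction, but it does not close the conjecture, nor does the paper.
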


In \cite{Shi} Shi studied such intertwining type properties
and the relation to minimax problems, via a \enquote{homeomorphism property},
that is
established under strong differentiability conditions.
Such techniques are not applicable here
(we lack good differentiability properties of the functions $m_0,\dots,m_n$).
We refer to \cite{TLMS2018} for a comparison with Shi's result
(in the periodic case).

\begin{rem}
\begin{enumerate}[(a)]
	\item
	In Conjecture \ref{conj:intertwining} we need to restrict to $\xx,\yy\in Y$ as the following example shows.
Let $K(t):=\log|t|$, $J(t):=-\infty$, if $t<2/3$ and $J(t):=0$ if $t\ge 2/3$,
$n:=1$, $\xx=(1/3)$, $\yy=(2/3)$.
Note that $J$ is upper semicontinuous.
Then $f(\xx,t)>f(\yy,t)$ for $t\in(2/3,1]$.
We have $m_0(\xx)=m_0(\yy)=-\infty$ and $m_1(\yy)=\log|1/3|<m_1(\xx)=\log|2/3|$,
so $\mv(\xx)\ge \mv(\yy)$ and $\mv(\xx)\ne \mv(\yy)$, i.e.,  majorization occurs.
\item In general, monotonicity of $K$ is necessary to exclude majorization. Indeed, in Example 5.4 of \cite{Vegtelen} a non-monotone kernel $K$ is given such that with $J\equiv 0$ strict majorization occurs.
\end{enumerate}
\end{rem}

It should be also clarified whether in Theorem \ref{thm:Intertwining-123} the upper semicontinuity of $J$ is needed; our proof below uses this property. So we also ask the validity of the previous conjecture for not upper semicontinuous $J$.

\section{Some technical lemmas }

First, we recall a lemma from \cite{mikrofentongen}
(see Lemma 3.3 therein, but also Lemma 3.2 in \cite{TLMS2018}).

\begin{lemma}\label{lem:widening}
Let $K$ be any kernel function. Let $0\le \alpha<a<b<\beta \le 1$ and  $p, q >0$.
Set
\begin{equation}\label{eq:mudef}
\kappa:=\frac{p(a-\alpha)}{q(\beta-b)}.
\end{equation}
\begin{enumerate}[(a)]
\item
\label{parta}
If $K$ satisfies \eqref{cond:monotone} and $\kappa\geq 1$,
then for every $t\in [0,\alpha]$ we have
\begin{equation}\label{eq:wideninglemma}
pK(t-\alpha)+qK(t-\beta) \le pK(t-a)+qK(t-b).
\end{equation}
\item
\label{partb}
If $K$ satisfies \eqref{cond:monotone} and $\kappa\leq 1$,
then \eqref{eq:wideninglemma} holds for every $t\in [\beta,1]$.
\item
\label{partc}
If $\kappa=1$ (but
$K$ does not
necessarily satisfy \eqref{cond:monotone}),
then \eqref{eq:wideninglemma} holds for every $t\in [0,\alpha] \cup [\beta,1]$.
\item
\label{partd}
In case of a strictly concave kernel function \ref{parta}, \ref{partb}
and \ref{partc}
hold with strict inequality in \eqref{eq:wideninglemma}.
\item
\label{parte}
If $K$ is a monotone kernel function \eqref{cond:monotone} ,
then for every $t\in [a,b]$
\begin{equation*}
pK(t-\alpha)+qK(t-\beta) \geq  pK(t-a)+qK(t-b),
\end{equation*}
with strict inequality if $K$ is strictly monotone \eqref{cond:smonotone}.
\end{enumerate}
\end{lemma}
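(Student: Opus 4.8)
The plan is to reduce every assertion to the standard three-chord property of concave functions: if $g$ is concave on an interval and $x_1<x_2\le x_3<x_4$ are points of it, then the chord slope of $g$ over $[x_1,x_2]$ is at least the chord slope over $[x_3,x_4]$, and this is strict when $g$ is strictly concave and $x_1<x_2<x_4$. First I would fix $t$ in the range occurring in the part being proved and record that the four relevant arguments satisfy $t-\beta<t-b<t-a<t-\alpha$ and that all of them lie in a single closed interval on which $K$ is concave: inside $[-1,0]$ when $t\in[0,\alpha]$, and inside $[0,1]$ when $t\in[\beta,1]$. This is exactly where the hypothesis $0\le\alpha<a<b<\beta\le1$ together with $0\le t\le1$ is used. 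If $K(t-\alpha)=-\infty$ or $K(t-\beta)=-\infty$, then the left-hand side of \eqref{eq:wideninglemma} is $-\infty$ and the inequality is trivial, so I may assume all four values of $K$ are finite and work with genuine difference quotients.

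Since $(t-\alpha)-(t-a)=a-\alpha$ and $(t-b)-(t-\beta)=\beta-b$, transposing terms shows that \eqref{eq:wideninglemma} is equivalent to
\[
p(a-\alpha)\,\sigma_1\ \le\ q(\beta-b)\,\sigma_2,
\]
where $\sigma_1$ is the chord slope of $K$ over $[t-a,t-\alpha]$ and $\sigma_2$ the chord slope of $K$ over $[t-\beta,t-b]$. Because $[t-\beta,t-b]$ lies to the left of $[t-a,t-\alpha]$, concavity gives $\sigma_2\ge\sigma_1$, strictly if $K$ is strictly concave. Part (c) now follows at once: $\kappa=1$ means $p(a-\alpha)=q(\beta-b)>0$, so the displayed inequality is precisely $\sigma_1\le\sigma_2$.

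For part (a) I bring in \eqref{cond:monotone}, which makes $K$ nonincreasing on $(-1,0)$ and hence $\sigma_1,\sigma_2\le0$, so $0\le-\sigma_2\le-\sigma_1$; together with $\kappa\ge1$, i.e.\ $p(a-\alpha)\ge q(\beta-b)>0$, this yields
\[
p(a-\alpha)(-\sigma_1)\ \ge\ q(\beta-b)(-\sigma_1)\ \ge\ q(\beta-b)(-\sigma_2),
\]
which rearranges to the desired inequality. Part (b) is the mirror image on $(0,1)$: there $K$ is nondecreasing, so $0\le\sigma_1\le\sigma_2$, and $\kappa\le1$ gives $p(a-\alpha)\sigma_1\le q(\beta-b)\sigma_1\le q(\beta-b)\sigma_2$. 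Part (d) is obtained by tracking strictness: strict concavity upgrades $\sigma_2\ge\sigma_1$ to $\sigma_2>\sigma_1$, and since the positive factor $q(\beta-b)$ multiplies $\sigma_1$ against $\sigma_2$ (respectively $-\sigma_1$ against $-\sigma_2$) at one step of each of these chains, that step becomes strict, whence the conclusion is strict in (a), (b) and (c) alike.

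Part (e) requires no slopes at all: for $t\in[a,b]$ one has $t-\alpha\ge t-a\ge0$ and $0\ge t-b>t-\beta\ge-1$, so \eqref{cond:monotone} used on $[0,1]$ gives $K(t-\alpha)\ge K(t-a)$ and used on $[-1,0]$ gives $K(t-\beta)\ge K(t-b)$; multiplying by $p>0$ and $q>0$ respectively and adding yields the claim, both inequalities being strict under \eqref{cond:smonotone} because $\alpha<a$ and $b<\beta$. I do not expect any real analytic difficulty; the only part needing care is the bookkeeping in the very first step --- verifying that for every admissible $t$ the four arguments genuinely fall into one closed interval of concavity, and handling the boundary configurations ($t$ at an endpoint of its interval, $\alpha=0$ or $\beta=1$, $K$ singular at $0$ or $\pm1$), in which \eqref{eq:wideninglemma} is either immediate from a $-\infty$ on the appropriate side or else the arguments already lie in the open set $(-1,0)\cup(0,1)$ where $K$ is real-valued.
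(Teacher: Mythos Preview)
The paper does not supply its own proof of this lemma: it is quoted verbatim from the authors' earlier work (Lemma~3.3 in \cite{mikrofentongen}, cf.\ also Lemma~3.2 in \cite{TLMS2018}), so there is no in-paper argument to compare against. Your approach---rewriting \eqref{eq:wideninglemma} as a comparison $p(a-\alpha)\sigma_1\le q(\beta-b)\sigma_2$ between chord slopes of $K$ over $[t-a,t-\alpha]$ and $[t-\beta,t-b]$, then invoking the decreasing-chord-slope property of concave functions together with the sign information coming from \eqref{cond:monotone}---is correct and is the natural argument; it is essentially how such lemmas are proved in the cited sources. Your handling of the boundary and $-\infty$ cases is adequate: for $t\in[0,\alpha]$ the inner arguments $t-a,t-b$ lie strictly in $(-1,0)$ so the right-hand side is finite, while a $-\infty$ on the left makes the inequality vacuous; the extension of concavity from $(-1,0)$ to $[-1,0]$ via the limit definition of $K(0),K(-1)$ is unproblematic once those values are finite. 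The strictness tracking in part~\ref{partd} is also sound, since the disjointness of the two chord intervals ($t-b<t-a$) forces $\sigma_2>\sigma_1$ under strict concavity, and this strict step survives the chain because it is multiplied by the positive constant $q(\beta-b)$.
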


The following two lemmas
settle particular cases
of Theorem \ref{thm:Intertwining-123}, but for general $n$.
\begin{lemma}
\label{lemma:comparable}
Let $n\in \NN$ be arbitrary,
and assume that $K$ is a  strictly concave
and monotone kernel function.
Let $J$ be an upper semicontinuous $n$-field
function.

If $\xx, \yy \in Y$ and $\xx \le \yy$
in the sense that $x_i\le y_i$, $i=1,\ldots,n$,
and $\xx\ne\yy$,
then intertwining of maxima holds.
\end{lemma}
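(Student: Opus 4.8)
The plan is to establish the two \emph{boundary} inequalities
\[
m_0(\xx)<m_0(\yy)\qquad\text{and}\qquad m_n(\xx)>m_n(\yy),
\]
after which intertwining for the pair $\xx,\yy$ is immediate: the indices $j:=0$ and $i:=n$ witness $m_j(\xx)<m_j(\yy)$ and $m_i(\xx)>m_i(\yy)$ (these indices are distinct when $n\ge 2$, and for $n=1$ they are the only two indices). A preliminary remark I would record first is that, by strict concavity together with \eqref{cond:monotone}, the one-sided strict monotonicity \eqref{cond:smonotone} extends across the midpoint: $K$ is strictly decreasing on $[-1,0]$ and strictly increasing on $[0,1]$ once $K(0):=\lim_{s\to 0}K(s)$ is included; moreover $K(\pm 1)$ are finite, so $K$ takes the value $-\infty$ only at $0$, and only when $K$ is singular. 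Since $\xx\le\yy$ and $\xx\neq\yy$, fix an index $k\in\{1,\dots,n\}$ with $x_k<y_k$.

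For the first inequality, note that $x_1\le y_1$ gives $I_0(\xx)=[0,x_1]\subseteq[0,y_1]=I_0(\yy)$. The function $F(\xx,\cdot)$ is the sum of the upper semicontinuous $J$ and the extended-continuous terms $\nu_jK(\cdot-x_j)$, hence is upper semicontinuous on the compact interval $I_0(\xx)$; since $\xx\in Y$ its supremum $m_0(\xx)$ is finite, so it is attained at some $t^*\in[0,x_1]$. For every $j$ we have $0\le t^*\le x_1\le x_j\le y_j$, hence $-1\le t^*-y_j\le t^*-x_j\le 0$, and the extended strict monotonicity gives $K(t^*-y_j)\ge K(t^*-x_j)$, with strict inequality exactly when $x_j<y_j$. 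All these quantities are finite at $t^*$: this is clear if $K$ is nonsingular, and if $K$ is singular then regularity of $\xx$ forces the nodes to be pairwise distinct and $t^*<x_1$ (otherwise $F(\xx,x_1)=-\infty=m_0(\xx)$, impossible), so $t^*-x_j<0$ for all $j$. Summing,
\begin{align*}
F(\yy,t^*)-F(\xx,t^*)
&=\sum_{j=1}^n \nu_j\bigl(K(t^*-y_j)-K(t^*-x_j)\bigr)\\
&\ge \nu_k\bigl(K(t^*-y_k)-K(t^*-x_k)\bigr)>0,
\end{align*}
and since $t^*\in I_0(\xx)\subseteq I_0(\yy)$ we conclude $m_0(\yy)\ge F(\yy,t^*)>F(\xx,t^*)=m_0(\xx)$.

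The second inequality is the mirror image. I would deduce it by applying the first inequality to the reflected data $\widetilde K(s):=K(-s)$, $\widetilde J(t):=J(1-t)$, $\widetilde\nu_j:=\nu_{n+1-j}$ and the reflected node systems $\widetilde{\mathbf u}:=(1-u_n,\dots,1-u_1)$: these satisfy all the hypotheses, one has $\widetilde\yy\le\widetilde\xx$ with $\widetilde\yy\neq\widetilde\xx$, regularity is preserved, and the zeroth interval maximum of the reflected problem equals $m_n$ of the original; hence $m_n(\yy)<m_n(\xx)$. (Alternatively, one repeats the argument of the previous paragraph verbatim on the rightmost interval $I_n$, using that $K$ is strictly increasing on $[0,1]$.) Taking $i:=n$ and $j:=0$ now yields $m_i(\xx)>m_i(\yy)$ and $m_j(\xx)<m_j(\yy)$, i.e.\ intertwining of maxima for $\xx,\yy$.

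The only genuinely delicate point — and the step I would treat most carefully — is the bookkeeping with the value $-\infty$: one must ensure that at the maximizer $t^*$ none of the terms $K(t^*-x_j)$, $K(t^*-y_j)$ equals $-\infty$, so that the displayed termwise comparison is meaningful. This is precisely where the hypotheses combine: monotonicity of $K$ confines its $-\infty$ value to the single point $0$; $\xx\in Y$ guarantees $m_0(\xx)>-\infty$; and, in the singular case, regular node systems have pairwise distinct coordinates. Upper semicontinuity of $J$ enters only to guarantee that $m_0(\xx)$ is attained, which is what upgrades the pointwise inequality $F(\yy,\cdot)>F(\xx,\cdot)$ on $I_0(\xx)$ to the strict inequality $m_0(\xx)<m_0(\yy)$. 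All remaining steps are routine monotone comparisons.
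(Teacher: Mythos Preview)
Your proof is correct and follows essentially the same route as the paper: compare $F(\xx,\cdot)$ with $F(\yy,\cdot)$ pointwise on the leftmost interval $I_0(\xx)$ and on the rightmost interval $I_n(\yy)$ using the strict monotonicity of $K$, pick a maximizer (available by upper semicontinuity), and use $\xx,\yy\in Y$ to ensure the relevant values are finite so that the strict inequality survives. Your extra care with the $-\infty$ bookkeeping and the reflection shortcut for the second inequality are cosmetic variations, not a different argument.
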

\begin{proof}
The monotonicity assumption \eqref{cond:monotone} provides for all $i=1,\ldots,n$
the inequalities $K(t-x_i)\le K(t-y_i)$ for $0\le t \le x_1$
and $K(t-x_i)\ge K(t-y_i)$ for $y_n \le t \le 1$;
moreover, the inequalities are strict whenever $x_i<y_i$ is strict,
for monotonicity has to be strict monotonicity in view of strict concavity.

Taking
the positive linear combination of
these inequalities and adding $J(t)$ to both sides,
we get that $F(\xx,t) \le F(\yy,t)$
for  $0\le t \le x_1$ and
$F(\xx,t) \ge F(\yy,t)$
for $y_n \le t \le 1$.
Moreover, as it is excluded that $x_i=y_i$ for all $i=1,\ldots,n$,
these inequalities have to be strict unless $J(t)=-\infty$.
Picking points\footnote{Here and throughout we use that
$J$, hence $F$ is upper semicontinuous and thus
on compact sets they have maximum points.
Upper semicontinuity is thus an indispensable assumption for our argument. }
$z\in [0,x_1]$ with $m_0(\xx)=F(\xx,z)$
and $w\in [y_n,1]$ with $m_n(\yy)=F(\yy,w)$,
the finiteness of $m_0(\xx), m_n(\yy)$
(following from the assumption $\xx, \yy \in Y$)
entails that $J(z), J(w) >-\infty$, hence we find
\[
m_0(\xx)=F(\xx,z)<F(\yy,z)\le
\max_{I_0(\xx)} F(\yy,\cdot) \le
\max_{I_0(\yy)} F(\yy,\cdot) =m_0(\yy)
\]
and similarly $m_n(\yy)=F(\yy,w) < F(\xx,w) \le m_n(\xx)$.
These
altogether show intertwining of maxima for $\xx$ and $\yy$.
\end{proof}

\begin{lemma}
\label{lemma:samenode}
Let $n\in \NN$ be arbitrary,
and assume that $K$ is a  strictly concave
and monotone kernel function.

Suppose that for any
upper semicontinuous $n-1$-field function $J^*$
and for $n-1$ nodes
we know that intertwining holds.

Let $J$ be an upper semicontinuous $n$-field function.

If $\xx\ne\yy \in Y$ consist of $n$ nodes each, and
there is $i$ such that $x_i=y_i$, then  intertwining holds
for $\xx$ and $\yy$.
\end{lemma}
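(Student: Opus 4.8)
The plan is to reduce the $n$-node problem with a shared coordinate $x_i = y_i =: c$ to an $(n-1)$-node problem on each side of $c$, so that the inductive hypothesis applies. First I would split the index set: since $x_i = y_i = c$, the point $c$ sits in both node systems, and the intervals $I_0,\dots,I_{i-1}$ lie in $[0,c]$ while $I_i,\dots,I_n$ lie in $[c,1]$ (for both $\xx$ and $\yy$). The idea is to absorb the translate $\nu_i K(\cdot - c)$ coming from the common node into the field function: on $[0,c]$ define $J^-(t) := J(t) + \nu_i K(t-c)$ together with the contributions of all nodes $x_{i+1},\dots,x_n$ of $\xx$ that lie to the right of $c$ — but here one must be careful, because those right-side nodes of $\xx$ need not coincide with those of $\yy$. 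So the cleaner route is: treat the left block $(x_1,\dots,x_{i-1})$ and right block $(x_{i+1},\dots,x_n)$ separately, and correspondingly for $\yy$, but the fields used for the two systems would then differ, which the inductive hypothesis does not allow. Hence the genuinely correct reduction is to keep the nodes of the "other side" as frozen extra nodes and fold them, together with $\nu_i K(\cdot-c)$, into the field — and to do this we need the two systems to agree on that other side as well, which is not given.

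To get around this, I would argue by contradiction and a two-sided comparison. Suppose majorization holds, say $m_j(\xx) \le m_j(\yy)$ for all $j$ (the case $\ge$ being symmetric), with $\xx \ne \yy$. Since $x_i = y_i = c$, either the left parts $(x_1,\dots,x_{i-1})$ and $(y_1,\dots,y_{i-1})$ differ, or the right parts differ (or both). Say the left parts differ. On the interval $[0,c]$ consider the reduced field $J^-(t) := J(t) + \sum_{k \ge i} \nu_k K(t - x_k)$ — wait, this again uses the $x_k$; instead I would exploit monotonicity: by Lemma~\ref{lem:widening}\ref{parte} and the monotonicity of $K$, on $[0,c]$ the right-side nodes of $\xx$ versus $\yy$ contribute in a controlled direction, so that comparing $F(\xx,\cdot)$ and $F(\yy,\cdot)$ on $[0,c]$ reduces — up to a correction with a definite sign — to comparing two $(i-1)$-node sum-of-translates functions with the \emph{same} field $J^-(t) := J(t) + \nu_i K(t-c) + \sum_{k>i}\nu_k K(t-\min(x_k,c)) = J(t) + \nu_i K(t-c)$ once we note $K(t-x_k) = K(t - c)$-type bounds fail — so the honest statement is: on $[0,c]$, using that $K$ is nonincreasing there, $\sum_{k>i}\nu_k K(t-x_k) \le \sum_{k>i}\nu_k K(t-y_k)$ need not hold either.

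Given these difficulties, the approach I would actually carry out is the following. Define $J^-:[0,c]\to\uR$ by $J^-(t):=J(t)+\nu_iK(t-c)$ and treat the nodes $\{x_{i+1},\dots,x_n\}$ and $\{y_{i+1},\dots,y_n\}$ as belonging to \emph{two different} $(n-i)$-node problems on $[c,1]$ — no: rather, I split once more. The correct and clean move: apply the inductive hypothesis \emph{separately} on the left and on the right, as follows. If the left parts differ, set $J^-(t) := J(t) + \sum_{k=i}^{n}\nu_k K(t-x_k)$ for $t\in[0,c]$; this is an upper semicontinuous $(i-1)$-field function on $[0,c]$ (rescale $[0,c]$ to $[0,1]$), and $F(\xx,t) = J^-(t) + \sum_{k<i}\nu_k K(t-x_k)$ on $[0,c]$. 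The point is that $F(\yy,t)\ge F(\xx,t)$ on $[0,c]$ gives, after subtracting the common left-node-free part is impossible since the left parts differ — so instead one uses that the inductive intertwining with field $J^-$ and nodes $(x_1,\dots,x_{i-1})$ versus $(y_1,\dots,y_{i-1})$ forces, for the function $G(\ww,t):=J^-(t)+\sum_{k<i}\nu_kK(t-w_k)$, that neither majorizes the other \emph{for the field} $J^-$; but $G(\xx^-,t)=F(\xx,t)$ on $[0,c]$ while $G(\yy^-,t)=F(\yy,t)-\sum_{k\ge i}\nu_k[K(t-y_k)-K(t-x_k)]$, and by Lemma~\ref{lem:widening}\ref{parte}/monotonicity the bracketed sum has a sign on $[0,c]$ (since $y_k\ge x_k$ or not — if not, swap roles). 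Tracking this sign shows the $\xx$-majorization on the left-block intervals would force $G$-majorization, contradicting the inductive hypothesis. The symmetric argument on the right (using part~\ref{parta}/\ref{partb} of the widening lemma and $\kappa=1$ scalings to control the cross terms) handles the case where the right parts differ. \textbf{The main obstacle} is exactly this bookkeeping of the cross-terms: when we absorb the common node and one side's nodes into a reduced field, the two node systems no longer live over the same field, and one must use the monotonicity/widening lemma to show the discrepancy has the "right" sign so that it cannot rescue a putative majorization — making precise which side (left or right) to reduce on, and in which direction the inequality $x_k \lessgtr y_k$ goes, is the delicate part, and is presumably where upper semicontinuity of $J$ (hence existence of maximizers) is again used.
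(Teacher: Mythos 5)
Your proposal never converges on a working argument, and the detours you take all stem from one unnecessary move: splitting the base interval $[0,1]$ at $c:=x_i=y_i$ and then trying to fold an entire side's nodes into the reduced field. That is exactly what creates the \enquote{cross-term bookkeeping} problem you flag as the main obstacle, because the right-side (or left-side) nodes of $\xx$ and $\yy$ differ, so the resulting fields differ and the inductive hypothesis (which requires the \emph{same} field for both node systems) is not applicable. The paper's proof avoids the problem entirely by \emph{not} splitting the interval and by absorbing \emph{only the common node} into the field: set $J^*(t):=J(t)+\nu_i K(t-x_i)$ on all of $[0,1]$, keep the weights $\nu_j$ for $j\ne i$, and delete the $i$-th coordinate from both $\xx$ and $\yy$ to get $(n-1)$-node systems $\xx^*,\yy^*$. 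Because $x_i=y_i$, this $J^*$ is the \emph{same} for both systems, it is again an upper semicontinuous $(n-1)$-field function, and one has the exact identity $F^*(\xx^*,\cdot)=F(\xx,\cdot)$ and $F^*(\yy^*,\cdot)=F(\yy,\cdot)$ on $[0,1]$ --- no sign-controlled error term, no widening lemma, no case split by which side differs.

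The second ingredient you miss is what happens to the interval maxima under this reduction. Deleting the node $x_i$ merges the two adjacent intervals into one; thus the new local maxima satisfy $m_j^*(\xx^*)=m_j(\xx)$ for indices corresponding to untouched intervals, while the merged interval has $m^*=\max\bigl(m_{i}(\xx),m_{i+1}(\xx)\bigr)$ (and likewise for $\yy$). Applying the inductive hypothesis to $\xx^*\ne\yy^*$ (which are nonsingular because the new intervals are unions of old nonsingular ones) gives indices $j,k$ with $m_j^*(\xx^*)>m_j^*(\yy^*)$ and $m_k^*(\xx^*)<m_k^*(\yy^*)$; for non-merged indices these transfer directly, and for the merged index one observes that $\max(a,b)>\max(a',b')$ already forces $a>a'$ or $b>b'$ (whichever realises the left maximum). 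Your write-up never addresses this merged-interval comparison, so even if the field reduction were fixed, the argument would still be incomplete. In short, the gap is structural: replace the left/right split by the single-node absorption over the whole interval, and then finish by the $\max$-comparison for the merged interval.
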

\begin{proof}
We  apply the assertion
to the new data $n^*:=n-1$,
$J^*(x):=J(x)+\nu_iK(x-x_i)$---which is an
upper semicontinuous
 $n^*$-field function---and
$\nu_j^*:=\nu_j$
for $j<i$ and
$\nu_j^*:=\nu_{j+1}$ for
$i\le j<n$.
Put now $\xx^*:=(x_1^*,\ldots,x_{n-1}^*)$, where $x_j^*:=x_j$ for $j<i$
and $x_j^*:=x_{j+1}$ for
$i\le j <n$;
and  construct $\yy^*$ similarly.

For these systems it is easy to see that
\[
m_j^*(\xx^*)=
\begin{cases}
m_j(\xx), &\text{ if } j<i,
\\
\max(m_i(\xx),m_{i+1}(\xx)),
&\text{ if } j=i,
\\
m_{j+1}(\xx),  &\text{ if }
i<j<n,
\end{cases}
\]
and similarly for $\yy^*$.
So  $\xx^*\ne \yy^*$ are two different node systems,
and both are nonsingular.
Then the assumption
provides that we have both
$m_j^*(\xx^*)>m_j^*(\yy^*)$ and also
$m_k^*(\xx^*)<m_k^*(\yy^*)$ for some $j,k$.
The same inequality immediately follows between
the respective maxima for $\xx$ and $\yy$ unless $j=i$ or $k=i$.
(Note that both cannot happen.)
If e.g. $j=i$, then we only
see $\max(m_i(\xx),m_{i+1}(\xx)) > \max(m_i(\yy),m_{i+1}(\yy))$.
But then if the maximum $m_i^*(\xx^*)=m_i(\xx)$
then obviously we also have
$m_i(\xx)>m_i(\yy)$;
and the same way if the maximum is $m_i^*(\xx^*)=m_{i+1}(\xx)$
then $m_{i+1}(\xx)>m_{i+1}(\yy)$.
Similar argument works for the case $k=i$.

Therefore it follows that strong
intertwining of maxima holds for $\xx,\yy$, too.

\end{proof}

By the same argument, we can also obtain the following
which will not be used, however.
\begin{rem}
\label{remark:samenodenousc}
Let $n\in \NN$ be arbitrary,
and assume that $K$ is a  strictly concave
and monotone kernel function.

Suppose that for any
$n-1$-field function $J^*$
and for $n-1$ nodes
we know that intertwining holds.

Let $J$ be an $n$-field function.

If $\xx\ne\yy \in Y$ consist of $n$ nodes each, and
there is $i$ such that $x_i=y_i$, then  intertwining holds
for $\xx$ and $\yy$.
\end{rem}

\section{Proof of Theorem \ref{thm:Intertwining-123}}
By Theorem \ref{thm:mikro}, indeed,
there exists an equioscillation point,
since $J$ was
assumed
to be upper semicontinuous.

Observe  that the second assertion
of the theorem
is entailed by the first one:
if $\xx \ne \yy$ are equioscillating,
then either  $m_i(\xx)\le m_i(\yy)$ for all $i=0,1,\ldots,n$
or
$m_i(\xx)> m_i(\yy)$ for all $i=0,1,\ldots,n$,
whence majorization occurs, providing a contradiction.

Therefore the proof hinges upon excluding majorization
i.e.~proving intertwining of maxima.
In what follows, we will use without further reference
that an upper semicontinuous function attains its supremum on compact sets.

\subsection{The case of only one node}
If $n=1$ and $\xx:=(x), \yy:=(y)$ are two nodes (``node systems''),
then $\xx\ne \yy$ entails $x\ne y$, say $x<y$.
This case is immediately solved by Lemma \ref{lemma:comparable}, $n=1$.

\subsection{The $n=2$ case}
\label{sec:n2case}
If $n=2$, and $\xx, \yy \in Y$ are two node systems,
with say $x_1\le y_1$, then either also $x_2\le y_2$, and
Lemma \ref{lemma:comparable}
applies, or
we must have
$x_2>y_2$.
Moreover, if $x_1=y_1$, then we can refer back
to the above Lemma \ref{lemma:samenode}
and the already settled case $n=1$
to obtain intertwining of maxima for $\xx,\yy$.
So, without
loss of
generality, we
may assume that
\[
x_1<y_1<y_2<x_2 .
\]

Taking $\alpha:=x_1, a:=y_1, b:=y_2, \beta:=x_2$
and $p:=\nu_1, q:=\nu_2$ in the above Lemma \ref{lem:widening},
according to
\ref{parta}, \ref{partb} and
\ref{partd}
we are led to
\[
\nu_1K(t-x_1)+\nu_2K(t-x_2)
<
\nu_1K(t-y_1)+\nu_2K(t-y_2),
\]
either on $I_0(\xx)=[0,x_1]$ or on $I_2(\xx)=[x_2,1]$.

Consider the first case and pick some $z\in I_0(\xx)$ with $m_0(\xx)=F(\xx,z)$.
In view of $\xx\in Y$,
$m_0(\xx)>-\infty$,
so also the value of $J$ at $z$ satisfies
$J(z)>-\infty$.
Therefore
we have
\begin{align*}
m_0(\xx)=F(\xx,z)
&=
J(z)+ \nu_1K(z-x_1)+\nu_2K(z-x_2)
\\& <
J(z)+ \nu_1K(z-y_1)+\nu_2K(z-y_2)
=F(\yy,z)
\\& \le
\max_{I_0(\yy)} F(\yy,\cdot)=m_0(\yy),
\end{align*}
using that $z\in I_0(\xx)=[0,x_1]\subset [0,y_1]=I_0(\yy)$.

Similarly, in the other case
we find $m_2(\xx)<m_2(\yy)$.

On the other hand by strict monotonicity
and part \ref{parte}
of Lemma \ref{lem:widening},
we also have
\begin{equation}
\nu_1K(t-x_1)+\nu_2K(t-x_2) > \nu_1K(t-y_1)+\nu_2K(t-y_2)
\text{ for }
t\in [y_1,y_2].
\label{y1y2strictincrease}
\end{equation}
Let us take here a point $u\in I_1(\yy)$
with $m_1(\yy)=F(\yy,u)$; then by $\yy\in Y$
we also have that $F(\yy,u)$, hence also $J(u)$, are both finite.
So adding $J(u)$ to
\eqref{y1y2strictincrease} with $t=u$,
we get
$m_1(\yy)=F(\yy,u) < F(\xx,u) \le \max_{I_1(\xx)} F(\xx,\cdot) = m_1(\xx)$.

Therefore, we have completed proving
 intertwining of maxima for $\xx, \yy$.

\subsection{The $n=3$ case}
\label{sec:n3case}
Assume now $n=3$ and take two node systems $\xx, \yy \in Y$.
Again, if there are equal $i$th coordinates,
then induction settles the assertion
according to
Lemma \ref{lemma:samenode}
and Subsection \ref{sec:n2case}.
Further, if there is a coordinatewise ordering between the node systems,
then Lemma \ref{lemma:comparable} finishes the argument.

So we must have $x_i<y_i$ for some coordinates and
also $x_i>y_i$ for all the other ones,
each case occurring.
By interchanging the role of the two nodes,
we may settle with two $<$ and one $>$ inequality signs
among the respective coordinates.
Correspondingly, we will consider three cases according
to occurrence of the inequality $x_i>y_i$ at $i=1,2$ or $3$.

\bigskip\noindent
\textbf{Case 1} ($i=2$). $x_1<y_1<y_2<x_2<x_3<y_3$.

We will compare the values of the various $m_j(\xx), m_j(\yy)$
through use of the intermediate node systems
$\zz:=(y_1,x_2,x_3)$ and $\ww:=(y_1,y_2,x_3)$.

Observe that taking into account the condition \eqref{cond:monotone}
and $x_1<y_1$ we get $m_0(\zz)\ge m_0(\xx)$ and $m_j(\zz)\le m_j(\xx)$, $j=1,2,3$.

Further, in view of
Lemma \ref{lem:widening}
\ref{parta}, \ref{partb} and \ref{partd}
(with $[\alpha,\beta]=[y_2,y_3]\supset [a,b]=[x_2,x_3]$)
we have either on $[0,y_2]$ or on $[y_3,1]$ the inequality
\begin{equation*}
\nu_2K(t-y_2)+\nu_3K(t-y_3)
<
\nu_2K(t-x_2)+\nu_3 K(t-x_3).
\end{equation*}
Adding $J(t)+\nu_1K(t-y_1)$ we find either on $[0,y_2]$ or on $[y_3,1]$
the inequality
\[
F(\yy,t) \le F(\zz,t)
\text{ with strict inequality if }
J(t)>-\infty.
\]
Now we consider the
maxima
of the sum of translate function $F(\yy,\cdot)$
on various intervals:
these
maxima
(and then also the respective values of $J$) have
to be finite (for $\yy\in Y$ with finite $m_j$ values),
whence  at the points, where
maxima
are attained, one has even the strict inequality.
It follows that either $m_0(\yy)<m_0(\zz)$
and $m_1(\yy)<\max_{[y_1,y_2]} F(\zz,\cdot) \le m_1(\zz)$ or $m_3(\yy)<m_3(\zz)$.

In view of \eqref{cond:monotone} and
$x_1<y_1$, we have
$m_1(\yy)< m_1(\zz) \le m_1(\xx)$ in the first case and
$m_3(\yy)<m_3(\zz)\le m_3(\xx)$
in the second case,
so altogether $m_j(\yy)<m_j(\xx)$ holds either for $j=1$ or for $j=3$.

We show similar comparison for $\ww$ next.
By the monotonicity assumption  \eqref{cond:monotone} and $x_3<y_3$
we have $m_j(\ww)\le m_j(\yy)$, $j=0,1,2$ (and $m_3(\ww)\ge m_3(\yy)$).

According to Lemma \ref{lem:widening}
\ref{parta}, \ref{partb} and \ref{partd}
applied for $[\alpha,\beta]=[x_1,x_2]\supset [a,b]=[y_1,y_2]$,
we have either on $[0,x_1]$ or on $[x_2,1]$ the strict inequality
\begin{equation*}
\nu_1K(t-x_1)+\nu_2K(t-x_2)
<
\nu_1K(t-y_1)+\nu_2 K(t-y_2).
\end{equation*}
Adding $J(t)+\nu_3K(t-x_3)$
we find either on $[0,x_1]$ or on $[x_2,1]$ the inequality
\[
F(\xx,t) \le F(\ww,t)
\textrm{ with strict inequality if }
J(t)>-\infty.
\]
Considering the
maxima
of the sum of translates function $F(\xx,\cdot)$
on various intervals, and that the
maxima
$m_j(\xx)$ are finite (for $\xx \in Y$),
we conclude that at the points,
where these maxima are attained,
we have strict inequality.
It follows that either $m_0(\xx)<m_0(\ww)$,
or $m_2(\xx)<m_2(\ww)$ and $m_3(\xx)<m_3(\ww)$ simultaneously.

On combining the above we find that either $m_0(\xx)<m_0(\ww)\le m_0(\yy)$
or $m_2(\xx)<m_2(\ww)\le m_2(\yy)$.
That is, either $m_0(\xx)<m_0(\yy)$ or $m_2(\xx)<m_2(\yy)$ must hold.

Therefore, intertwining of maxima holds for $\xx, \yy \in Y$.

\bigskip\noindent
\textbf{Case 2} $(i=3$). $x_1<y_1, x_2 < y_2 < y_3 <x_3$.

One thing is immediate from (strict) monotonicity: we have $m_2(\xx)>m_2(\yy)$.

Therefore, the proof hinges upon showing a reverse inequality for some $j$.
We will distinguish two subcases in the proof of $m_j(\xx)< m_j(\yy)$ for some $j$.

\medskip\noindent
\textbf{Case 2.1.}
If $\nu_1(y_1-x_1)\ge \nu_3(x_3-y_3)$.

We apply Lemma \ref{lem:widening}
\ref{parta} and \ref{partd}
with $p:=\nu_1, q:=\nu_3, \alpha:=x_1, a:=y_1, b:=y_3, \beta:=x_3$.
In this case $\kappa$, as defined in \eqref{eq:mudef},
will satisfy $\kappa\ge 1$,
so
we obtain
\[
\nu_1K(t-x_1)+\nu_3K(t-x_3)
<
\nu_1 K(t-y_1) + \nu_3 K(t-y_3)
\qquad (t \in [0,x_1]).
\]
In view of monotonicity and
$x_2<y_2$, we
also have here
\[
\nu_2K(t-x_2)<\nu_2K(t-y_2) \qquad (t \in [0,x_1]).
\]
Adding these inequalities and then $J(t)$ to both sides leads to
\[
F(\xx,t)\le F(\yy,t) \
\text{ for } t \in [0,x_1]
\text{ with strict inequality if } J(t)>-\infty.
\]

So pick now a point $v\in [0,x_1]$ with $m_0(\xx)=F(\xx,v)$:
then with this $v$ $F(\xx,v)$,
hence also $J(v)$ is finite, and the above entails
\[
m_0(\xx)=F(\xx,v) < F(\yy,v) \le\max_{[0,x_1]} F(\yy,\cdot) \le m_0(\yy),
\]
providing us the required inequality with $j=0$ in this case.

\medskip\noindent
{\textbf{Case 2.2.}} If $\nu_1(y_1-x_1)< \nu_3(x_3-y_3)$.

In this subcase
let us define $\zz:=(x_1,y_2,z_3)$ with $z_3:=\frac{\nu_1}{\nu_3}(y_1-x_1)+y_3$.
Note that then $x_1<y_2<y_3<z_3<x_3$ holds,
so
the nodes of $\zz$ are listed in their natural order.

Applying Lemma \ref{lem:widening}
\ref{partc} and \ref{partd}
with $p:=\nu_1, q:=\nu_3, \alpha:=x_1, a:=y_1, b:=y_3, \beta:=z_3$,
we see $\kappa=1$ and so the
lemma provides inequalities \emph{on both sides}:
\[
\nu_1K(t-x_1)+\nu_3K(t-z_3) < \nu_1 K(t-y_1) + \nu_3 K(t-y_3)
\qquad (t \in [0,x_1]\cup[z_3,1]).
\]
Let us add
$\nu_2 K(t-y_2)+J(t)$
to both sides.
Since $y_2$
 is strictly between $x_1$ and $z_3$, we have
$K(t-y_2)>-\infty$
everywhere in $[0,x_1]\cup[z_3,1]$.
So, we  add
$\nu_2 K(t-y_2)+J(t)$,
a finite amount to the above inequality
whenever $J(t)>-\infty$.
This furnishes
for all $t \in [0,x_1]\cup[z_3,1]$
\[
F(\zz,t) \le F(\yy,t)
\text{ with strict inequality whenever }
J(t)>-\infty.
\]
Take now two points $u\in [0,x_1]$ and $v\in [z_3,1]$
where $m_0(\zz)=F(\zz,u)$ and $m_3(\zz)=F(\zz,v)$.
Note that the interval $I_0(\zz)=[0,x_1]=I_0(\xx)$ is nonsingular,
because $\xx \in Y$.
So is the interval
$I_3(\zz)=[z_3,1]\supset [x_3,1]=I_3(\xx)$,
because
$I_3(\xx)$
is nonsingular.
It follows that the
maxima
$m_0(\zz), m_3(\zz)$ are finite,
thus so are $J(u), J(v)$, too.
From these and $[0,x_1]\subset [0,y_1]$, $[z_3,1]\subset [y_3,1]$ we infer
\begin{align*}
m_0(\zz)&=F(\zz,u)<F(\yy,u)
\le \max_{[0,y_1]} F(\yy,\cdot) =m_0(\yy),
\\
m_3(\zz)&=F(\zz,v)<F(\yy,v)
\le \max_{[y_3,1]} F(\yy,\cdot) =m_3(\yy).
\end{align*}
Therefore, we have the strict inequalities $m_i(\zz) < m_i(\yy)$ for $i=0, 3$.

Next, we apply Lemma \ref{lem:widening}
\ref{parta} and \ref{partb}
with
$p:=\nu_2, q:=\nu_3, \alpha:=x_2, a:=y_2, b:=z_3, \beta:=x_3$.
We find that either on $[0,x_2]$ or on $[x_3,1]$ the inequality
\[
\nu_2K(t-x_2)+\nu_3K(t-x_3)
\le
\nu_2 K(t-y_2) + \nu_3 K(t-y_3)
\]
must hold.
Adding $J(t)+\nu_1K(t-x_1)$ thus leads to
\[
F(\xx,t) \le F(\zz,t)
\]
either for all $t\in [0,x_2]$ or for all $t\in [x_3,1]$.
Taking
maxima
on $[0,x_1]$ or on $[x_3,1]$
therefore furnishes either $m_0(\xx)\le m_0(\zz)$
or $m_3(\xx)\le \max_{[x_3,1]} F(\zz,\cdot) \le m_3(\zz)$.

Combining this with the above inequalities
$m_i(\zz) < m_i(\yy)$ ($i=0,3$),
we also obtain either $m_0(\xx)< m_0(\yy)$ or
$m_3(\xx) < m_3(\yy)$.
That is, we arrive at the desired inequality $m_j(\xx) < m_j(\yy)$
either for $j=0$ or for $j=3$.

\bigskip\noindent
\textbf{Case 3 ($i=1$).} $y_1<x_1 <x_2 < y_2, x_3 <y_3$.

Consider the modified system
$J^*(t):=J(1-t)$, $\nu_j^*:=\nu_{4-j}$
and $K^*(t):=K(-t)$ and the modified node systems
$\yy^*:=(y_1^*,y_2^*,y_3^*):=(1-x_3,1-x_2,1-x_1)$
and $\xx^*:=(x_1^*,x_2^*,x_3^*):=(1-y_3,1-y_2,1-y_1)$.

Let us then compute $F^*(\yy^*,s)$.
We find
\begin{align*}
F^*(\yy^*,s)&=J(1-s)+\nu_3K(-(s-y_1^*))+\nu_2K(-(s-y_2^*))+\nu_1K(-(s-y_3^*))
\\ & = J(1-s)+\nu_3K(1-s-x_3)+\nu_2K(1-s-x_2)+\nu_1K(1-s-x_1)
\\& =F(\xx,1-s).
\end{align*}
Similarly, $F^*(\xx^*,s)=F(\yy,1-s)$.
It follows that $m^*_j(\xx^*)=m_{4-j}(\yy)$ and
that $m^*_j(\yy^*)=m_{4-j}(\xx)$,
therefore the two function systems and nodes exhibit intertwining of maxima
precisely in the corresponding cases.

However, for the modified system we have
$y_1^*=1-x_3>1-y_3=x_1^*$, $y_2^*=1-x_2>1-y_2=x_2^*$
and $y_3^*=1-x_1<1-y_1=x_3^*$,
so  for these node systems we can return to Case 2.
Therefore, $\xx^*, \yy^*$ have strict intertwining of maxima,
hence so does $\xx,\yy$, too.

\qed


\providecommand{\bysame}{\leavevmode\hbox to3em{\hrulefill}\thinspace}
\providecommand{\MR}{\relax\ifhmode\unskip\space\fi MR }
\providecommand{\MRhref}[2]{%
  \href{http://www.ams.org/mathscinet-getitem?mr=#1}{#2}
}
\providecommand{\href}[2]{#2}

\medskip

\noindent
\hspace*{5mm}
\begin{minipage}{\textwidth}
\noindent
\hspace*{-5mm}Bálint Farkas\\
 School of Mathematics and Natural Sciences,\\
 University of Wuppertal\\
  Gau{\ss}stra{\ss}e 20\\
 42119 Wuppertal, Germany\\
\end{minipage}

\medskip

\noindent
\hspace*{5mm}
\begin{minipage}{\textwidth}
\noindent
\hspace*{-5mm}Béla Nagy\\
 Department of Analysis,\\
 Bolyai Institute, University of Szeged\\
 Aradi vértanuk tere 1\\
  6720 Szeged, Hungary\\
\end{minipage}

\medskip

\noindent
\hspace*{5mm}
\begin{minipage}{\textwidth}
\noindent
\hspace*{-5mm}
Szilárd Gy.{} Révész\\
 Alfréd Rényi Institute of Mathematics\\
 Reáltanoda utca 13-15\\
 1053 Budapest, Hungary \\
\end{minipage}

\end{document}